\newtheorem{theorem}{Theorem}
\providecommand{\keywords}[1]
{
  \small	
  \textbf{\textit{Keywords---}} #1
}
\title{A New Generating Function for Hermite Polynomials}
\author{Manouchehr Amiri}
\affil{Independent Researcher}
\date{}
\affil{manoamiri@gmail.com} 
\begin{document}
\maketitle

\begin{abstract}
This paper presents a new generating function for Hermite polynomials of one variable in the form of $g(x,t)=\sum_{n=0}^{\infty }t^{n}H^{e}_{n}(x)$ and reveals its connection with incomplete gamma function. Recurrence relations of Hermite polynomials are derived by a new generating function. A definition for two-variable Hermite generating function based on this generating function has been proposed.
\end{abstract}

\keywords{Hermite polynomials, Generating function, Incomplete gamma function, Error function}

\section{Introduction}

Generating functions as a powerful tool have many applications in the diverse fields of mathematics such as statistics, analysis of algorithms, combinatorics, probability theory, and special functions. Generating function of special functions and polynomials are represented as infinite series over these polynomials in various types \cite{askey1975orthogonal},\cite{brafman1951generating},\cite{cohl2020lectures},\cite{el2006special} and \cite{kruchinin2021method} . For some polynomials this summation does not always contain the terms with integer factorial $\frac{1}{n!}$ such as Laguerre and Legendre polynomials. For the Hermite polynomial, all known generating functions contain factorial terms in the related sum \cite{carlitz1990some}, \cite{fan2015generating} and \cite{thangavelu1993lectures} .In this article we present a new generating function for Hermite polynomials without factorial terms in denominators of the related sum. Important recurrence relations of Hermite polynomials are derived from this new generating function. The connections of this generating function with the error function and the cumulative distribution function are presented. A new integral contour representation for Hermite polynomials is derived. Extension of this generating function to bivariate Hermite polynomials has been shown . 

\section{A New Generating Function}
In this section, a new generating function for one-variable Hermite polynomials is introduced. The recurrence relations and connections to Incomplete gamma function, Error function and cumulative distribution function are discussed in the subsections.

\subsection{New Generating Function for one-variable Hermite Polynomials
}
Known generating function for Hermite polynomials of one variable is represented as:

\begin{equation}
 \sum_{0}^{\infty }H^{e}_{n}(x)\frac{t^{n}}{n!}=e^{xt-\frac{1}{2}t^{2}}  
\end{equation}\

Where $H^{e}_{n}(x)=He_{n}(x)$ is denoted as probabilistic Hermite polynomials. This generating function contains a factorial term $\frac{1}{n!}$. We present a new generating function without the factorial term in the related sum.  
Probabilistic Hermite polynomials $H^{e}_{n}(x)$ are presented as:

\begin{equation}
 H^{e}_{n}(x)=e^{\frac{-D^{2}}{2}}x^{n}   
\end{equation}\

Where $D=\frac{d }{dx}$. We apply this operator equation to obtain a generating function for $H^{e}_{n}(x)$. 

\begin{theorem}  Let $  x\in \mathbb{R},t\in \mathbb{R} -\left\{ 0 \right\} ,n\in \mathbb{N} \cup \left\{ 0 \right\} $

A generating function for Hermite polynomials is:
         
     \begin{equation}
     g(x,t)=\sum_{n=0}^{\infty }t^{n}H^{e}_{n}(x)=\frac{1}{\sqrt{2t^{2}}}e^{\frac{(1-xt)^{2}}{2t^{2}}}\Gamma(\frac{1}{2},\frac{(1-xt)^{2}}{2t^{2}})
    \end{equation}\  

\end{theorem}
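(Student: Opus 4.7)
\medskip

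The plan is to exploit the operator identity $H^e_n(x)=e^{-D^2/2}x^n$ from equation~(2). Pulling the operator outside the sum (formally at first) converts the series into an elementary geometric one:
\begin{equation}
g(x,t)=e^{-D^{2}/2}\sum_{n=0}^{\infty}(tx)^{n}=e^{-D^{2}/2}\,\frac{1}{1-tx},
\end{equation}
valid whenever the geometric series is meaningful. So the whole problem reduces to evaluating $e^{-D^2/2}$ applied to $(1-tx)^{-1}$.

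To evaluate that, I would first replace $(1-tx)^{-1}$ by its Laplace-type integral representation $(1-tx)^{-1}=\int_0^\infty e^{-s(1-tx)}\,ds$ (requiring $1-tx>0$), and then interchange $e^{-D^2/2}$ with the $s$-integral. On a pure exponential $e^{ax}$ the operator acts diagonally: since $D\,e^{ax}=a\,e^{ax}$, we have $e^{-D^2/2}e^{ax}=e^{-a^2/2}e^{ax}$. Applying this with $a=st$ yields
\begin{equation}
g(x,t)=\int_{0}^{\infty}e^{-s}\,e^{-s^{2}t^{2}/2}\,e^{stx}\,ds=\int_{0}^{\infty}\exp\!\left(-\tfrac{t^{2}}{2}s^{2}+(tx-1)s\right)ds.
\end{equation}

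Next I would complete the square in $s$, shifting the exponent to $-\tfrac{t^2}{2}\bigl(s-\tfrac{tx-1}{t^2}\bigr)^2+\tfrac{(1-tx)^2}{2t^2}$, and then substitute $u=\tfrac{t}{\sqrt 2}\bigl(s-\tfrac{tx-1}{t^2}\bigr)$ (taking $t>0$ first) so that the integral becomes a Gaussian tail $\int_{(1-tx)/(t\sqrt 2)}^{\infty} e^{-u^2}\,du$. Finally, the change of variables $u=\sqrt{v}$ in the definition $\Gamma(\tfrac12,a)=\int_a^\infty v^{-1/2}e^{-v}\,dv$ gives $\int_{\sqrt a}^\infty e^{-u^2}du=\tfrac12\Gamma(\tfrac12,a)$ with $a=(1-tx)^2/(2t^2)$, which reassembles exactly into the claimed closed form $\tfrac{1}{\sqrt{2t^2}}\,e^{(1-xt)^2/(2t^2)}\,\Gamma\!\bigl(\tfrac12,\tfrac{(1-xt)^2}{2t^2}\bigr)$. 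The case $t<0$ is handled by noting that $\sqrt{2t^2}=|t|\sqrt 2$ and tracking the orientation of the integration limits.

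The main obstacle will be rigor rather than calculation: the series $\sum t^n x^n$ only converges for $|tx|<1$, the integral representation of $(1-tx)^{-1}$ needs $1-tx>0$, and the operator $e^{-D^{2}/2}$ is unbounded, so the termwise interchange and the exchange of $e^{-D^{2}/2}$ with the $s$-integral both require justification. The cleanest route is to verify the identity formally on the dense subspace of polynomials or exponentials, and then appeal to analytic continuation in the complex variables $x,t$ to extend the equality to the domain stated in the theorem (with the error-function / incomplete-gamma interpretation providing the analytic extension outside the original convergence regime).
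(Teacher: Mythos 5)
Your proposal is correct, and it shares only its first step with the paper's proof: both begin from the operator identity $g(x,t)=e^{-D^{2}/2}\,(1-xt)^{-1}$, but the routes diverge immediately afterwards. The paper expands $e^{-D^{2}/2}$ as a power series in $D^{2}$, obtaining $\sum_{j}(-1)^{j}\frac{(2j)!}{2^{j}j!}\,t^{2j}(1-xt)^{-2j-1}$, rewrites the coefficients via $\Gamma(\frac{1}{2}-j)$, and resums by identifying the result with $(1-\frac{d}{dz})^{-1}z^{-1/2}$, evaluated through the antiderivative identity $-e^{z}\int e^{-z}z^{-1/2}\,dz=e^{z}\Gamma(\frac{1}{2},z)$. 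You instead insert the Laplace representation $(1-xt)^{-1}=\int_{0}^{\infty}e^{-s(1-xt)}\,ds$, use the diagonal action $e^{-D^{2}/2}e^{ax}=e^{-a^{2}/2}e^{ax}$, and reduce everything to a Gaussian tail integral, which equals $\frac{1}{2}\Gamma(\frac{1}{2},a)$. Your computation checks out: completing the square and substituting gives exactly $\frac{1}{\sqrt{2t^{2}}}e^{(1-xt)^{2}/(2t^{2})}\Gamma(\frac{1}{2},\frac{(1-xt)^{2}}{2t^{2}})$ for $t>0$ and $1-xt>0$, with the $t<0$ case handled as you indicate. Your route buys something real: the paper's intermediate series (its equations (5)--(6) and (9)--(10)) has terms whose successive ratios grow like $2j+1$, so it diverges for every $t\neq 0$ and the resummation is purely formal/asymptotic, whereas your integral $\int_{0}^{\infty}e^{-t^{2}s^{2}/2+(tx-1)s}\,ds$ converges absolutely for all real $t\neq 0$ and all $x$, yielding a convergent derivation and a built-in analytic continuation beyond the region $|xt|<1$ where the defining series makes sense. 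The gaps you flag --- termwise application of the unbounded operator to the geometric series and the interchange of $e^{-D^{2}/2}$ with the $s$-integral --- are genuine, but they are precisely the gaps the paper also leaves open (compounded there by the divergent intermediate series), so your argument is at least as rigorous as the original and arguably more so.
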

    
    \begin{proof}

    \textup{Acting operator $O=e^{\frac{-\partial^{2}_{x}}{2}}$ from equation (2) on the series $\sum_{n=0}^{\infty }t^{n}x^{n}=\frac{1}{1-xt}$ , yields a generating function for $H_{n}^{e}(x)$:} 

\begin{equation}
    g(x,t)=e^{\frac{-\partial^{2}_{x}}{2}}\sum_{n=0}^{\infty }t^{n}x^{n}=\sum_{n=0}^{\infty }t^{n}H_{n}^{e}(x)=e^{\frac{-\partial^{2}_{x}}{2}}\left( \frac{1}{1-xt} \right)
    \end{equation}\
    The convergence of $\sum_{n=0}^{\infty }t^{n}x^{n}=\frac{1}{1-xt}$ requires $\left| xt \right| < 1$ or $0<1-xt<2$

    \textup{Expanding the right side of equation (4) gives:}

    \begin{equation}
    g(x,t)=\sum_{j=0}^{\infty }(-1)^{j}\frac{t^{2j}(1-xt)^{-2j-1}(2j)!}{2^{j}j!}
    \end{equation}\

    \textup{With respect to the identity $\frac{(2j)!}{2^{j}j!}=\frac{\sqrt{\pi }(-2)^{j} }{\Gamma(\frac{1}{2}-j)}$ and denoting $y=t^{-1}(1-xt)$  we have:}

    \begin{equation}
    g(x,t)=\frac{\sqrt{\pi}}{(1-xt)}\sum_{j=0}^{\infty }
   (\frac{y^{2}}{2})^{-j}\Gamma(\frac{1}{2}-j)^{-1}
    \end{equation}\

   By the identity for $ n-th $ derivative of $ z^{s}$ when $ n\in \mathbb{N}$ we get
    
    \begin{equation}
    \frac{d^{n} }{dz^{n}} z^{s}=\frac{\Gamma(s+1)}{\Gamma(s+1-n)}z^{s-n}
    \end{equation}\

    \textup{For $s=-\frac{1}{2}$  we have:}
    \begin{equation}
    \frac{d^{n} }{dz^{n}} z^{-\frac{1}{2}}=\frac{\Gamma(\frac{1}{2})}{\Gamma(\frac{1}{2}-n)} z^{-\frac{1}{2}-n} 
    \end{equation}\

    \begin{equation}
    \sum_{n=1}^{\infty }\frac{d ^{n}}{dz^{n}} z^{-\frac{1}{2}}=\Gamma(\frac{1}{2}) z^{-\frac{1}{2}}\sum_{n=1}^{\infty }\frac{z^{-n}}{\Gamma(\frac{1}{2}-n)}
    \end{equation}\ 

    \textup{The strict condition for convergence of this formal series is $\left| z \right| > 1$ . Rewriting equation (9) by replacing $j=n$ on the right side, yields:}

    \begin{equation}
    z^{-\frac{1}{2}}+\sum_{n=1}^{\infty }\frac{d ^{n}}{dz^{n}} z^{-\frac{1}{2}}=\Gamma(\frac{1}{2}) z^{-\frac{1}{2}}\sum_{j=0}^{\infty }\frac{z^{-j}}{\Gamma(\frac{1}{2}-j)}
    \end{equation}\

    By changing the variable $\frac{y^{2}}{2}=z $ and with respect to $1-xt>0$, for $z^{-\frac{1}{2}}$  we have:

    \begin{equation}
    z^{-\frac{1}{2}}=\sqrt{2} \frac{\left|t\right|}{1-xt}= \frac{\sqrt{2t^{2}}}{1-xt} 
    \end{equation}\

    By this equation and comparing equation (10) with (6)   and applying the identity $\Gamma(\frac{1}{2})=\sqrt{\pi}$, equation (10) becomes:   
    
    \begin{equation}
    z^{-\frac{1}{2}}+\sum_{n=1}^{\infty }\frac{d ^{n}}{dz^{n}} z^{-\frac{1}{2}}=\sqrt{2t^{2}} g(x,t)
    \end{equation}\

    \textup{ The left side of equation (12) can be calculated as follows:}
    
    \begin{equation}
   z^{-\frac{1}{2}}+\sum_{n=1}^{\infty }\frac{d ^{n}}{dz^{n}} z^{-\frac{1}{2}}=\frac{1}{(1-\frac{d }{dz})} z^{-\frac{1}{2}}
    \end{equation}\

    \textup{Then by the following integral identity we get}

    \begin{equation}
    \frac{1}{(1-\frac{d }{dz})} z^{-\frac{1}{2}}= -e^{z}\int_{}^{}e^{-z} z^{-\frac{1}{2}}dz
    =-e^{z}\left[ -\Gamma(\frac{1}{2},z) \right]
    \end{equation}\

    \textup{ By Calculating the integral, the equation (12) becomes:}

    \begin{equation}
    g(x,t)=\frac{1}{\sqrt{2t^{2}}}e^{z}\Gamma(\frac{1}{2},z)
    \end{equation}\

    \textup{ The term $\Gamma(\frac{1}{2},z)$ on the right side is incomplete gamma function. By writing $z$ in terms of $t$ and $x$ we obtain:}

    \begin{equation}
    g(x,t)=\sum_{n=0}^{\infty }t^{n}H^{e}_{n}(x)=\frac{1}{\sqrt{2t^{2}}}e^{\frac{(1-xt)^{2}}{2t^{2}}}\Gamma(\frac{1}{2},\frac{(1-xt)^{2}}{2t^{2}})
    \end{equation}\  
    \end{proof}

    \textup{This is a new generating function for Hermite polynomials. As a test for validation of equation (16) at the limit $t\to 0$, by knowing the limit:}
    
    \begin{equation}
    \lim_{t\to 0} g(x,t)=\lim_{t\to 0}\sum_{n=0}^{\infty }t^{n}H^{e}_{n}(x)=1
    \end{equation}\  
    
    \textup{We can verify the limit of the right side of (16) when $t\to 0$ .} 

    \begin{equation}
   \lim_{t\to 0} g(x,t)=\lim_{t\to 0}\sum_{n=0}^{\infty }t^{n}H^{e}_{n}(x)=\lim_{t\to 0}\frac{1}{\sqrt{2t^{2}}}e^{\frac{(1-xt)^{2}}{2t^{2}}}\Gamma(\frac{1}{2},\frac{(1-xt)^{2}}{2t^{2}})=1
    \end{equation}\
  
   \textup{For validation of equation (18), this limit was calculated by online calculator for some values of $\lim_{t\to 0} g(x,t)$. Calculations of equation (15) for $z=1200$ and $z=2000$ while $t$ is small and $x=1$, result in 0.994 and 0.997 for $g(x,t)$  respectively. This intuitively verifies the limit in equation (18). }\\
   
   \textbf{Remark} 
   
   Due to the term $\sqrt{2}\left|t\right|=\sqrt{2t^{2}}$ in the generating function (16), the derivative of the generating function could be calculated for $t>0$ and $t<0$ separately. The results of both are the same for derivation of recurrence relations that are discussed in the next section, thus the recurrence relations have been derived for $t>0$.

\subsection{Derivation of Recurrence Relations}
   
   \textup{In this section we derive the Recurrence relations of Hermite polynomials by generating function introduced in the previous section. Considering the remark in previous section, we calculate the partial derivatives of the right side of the generating function $g(x,t)$ in equation (16) respect to $t$ and $x$ respectively. Due to to the derivative relation for $\Gamma(\frac{1}{2},z)$ :}

   \begin{equation}
    \frac{d }{dz}\Gamma\left( \frac{1}{2},z \right)=-z^{-\frac{1}{2}}e^{-z}
    \end{equation}\

    \textup{ Derivative of right side of equation (16) respect to $t$ gives:}

   \begin{equation}
    \frac{\partial }{\partial t}g(x,t)=-\frac{1}{t}g(x,t)-\frac{1-xt}{t^{3}}g(x,t)+\frac{1}{t^{3}}
    \end{equation}\

    \textup{ Derivative of same equation respect to $x$ yields:}

    \begin{equation}
    \frac{\partial }{\partial x}g(x,t)=-\frac{1-xt}{t}g(x,t)+\frac{1}{t}
    \end{equation}\

    Substitution of $g(x,t)$ in equation (20) by the summation definition of equation (3) yields the following:

    \begin{equation}
    \frac{\partial }{\partial x}\sum_{n=0}^{\infty }t^{n}H^{e}_{n}(x)=-\frac{1-xt}{t}\sum_{n=0}^{\infty }t^{n}H^{e}_{n}(x)+\frac{1}{t}
    \end{equation}\

    Thus, we get:

    \begin{equation}
    \sum_{n=0}^{\infty }t^{n}\frac{d }{dx}H^{e}_{n}(x)=-\frac{1}{t}\sum_{n=0}^{\infty }t^{n}H^{e}_{n}(x)+x\sum_{n=0}^{\infty }t^{n}H^{e}_{n}(x)+\frac{1}{t}
    \end{equation}\

    \begin{equation}
    \sum_{n=0}^{\infty }t^{n}\frac{d }{dx}H^{e}_{n}(x)=-\sum_{n=0}^{\infty }t^{n-1}H^{e}_{n}(x)+x\sum_{n=0}^{\infty }t^{n}H^{e}_{n}(x)+\frac{1}{t}
    \end{equation}\

    \begin{equation}
    \sum_{n=0}^{\infty }t^{n}\frac{d }{dx}H^{e}_{n}(x)=-\frac{1}{t}-\sum_{n=0}^{\infty }t^{n}H^{e}_{n+1}(x)+x\sum_{n=0}^{\infty }t^{n}H^{e}_{n}(x)+\frac{1}{t}
    \end{equation}\

    Equating the coefficients of $t^{n}$ on both sides gives:

    \begin{equation}
    \frac{d }{dx}H^{e}_{n}(x)=-H^{e}_{n+1}(x)+xH^{e}_{n}(x)
    \end{equation}\
 This is one of the recurrence relation of Hermite polynomials.

 Substitution of $g(x,t)$ in equation (20) by the summation definition of equation (3) yields the following:

 \begin{equation}
    \frac{\partial }{\partial t}\sum_{n=0}^{\infty }t^{n}H^{e}_{n}(x)=-\frac{1}{t}\sum_{n=0}^{\infty }t^{n}H^{e}_{n}(x)-\frac{1-xt}{t^{3}}\sum_{n=0}^{\infty }t^{n}H^{e}_{n}(x)+\frac{1}{t^{3}}
    \end{equation}\

\begin{equation}
    \sum_{n=0}^{\infty }nt^{n-1}H^{e}_{n}(x)=-\sum_{n=0}^{\infty }t^{n-1}H^{e}_{n}(x)-\frac{1}{t^{3}}\sum_{n=0}^{\infty }t^{n}H^{e}_{n}(x)+ \frac{x}{t^{2}}\sum_{n=0}^{\infty }t^{n}H^{e}_{n}(x)+\frac{1}{t^{3}}
    \end{equation}\

 \begin{equation}
    \sum_{n=0}^{\infty }nt^{n-1}H^{e}_{n}(x)=-\sum_{n=0}^{\infty }t^{n-1}H^{e}_{n}(x)-\frac{1}{t^{3}}-\sum_{n=1}^{\infty }t^{n-3}H^{e}_{n}(x)+ x\sum_{n=0}^{\infty }t^{n-2}H^{e}_{n}(x)+\frac{1}{t^{3}}
    \end{equation}\   
    
    \textup{Multiplying both sides of Equations (29) by $t^2$ gives:}

  \begin{equation}
    \sum_{n=0}^{\infty }(n+1)t^{n+1}H^{e}_{n}(x)=-\sum_{n=1}^{\infty }t^{n-1}H^{e}_{n}(x)+ x\sum_{n=0}^{\infty }t^{n}H^{e}_{n}(x)
    \end{equation}\  

    Equating the coefficients of $t^{n}$ on both sides gives:

   \begin{equation}
    nH^{e}_{n-1}(x)=-H^{e}_{n+1}(x)+ xH^{e}_{n}(x)
    \end{equation}\  

    This is another recurrence relation of Hermite polynomials.

    \subsection{Associated Partial Differential Equation of Generating Function}

    \textup{Multiplying both sides of Equations (20) by $t^2$ gives:}

    \begin{equation}
    t^{2}\frac{\partial }{\partial t}g(x,t)=-tg(x,t)-\frac{1-xt}{t}g(x,t)+\frac{1}{t}
    \end{equation}\

    \textup{By comparing the equations (21) and (32) we have:}

    \begin{equation}
    t^{2}\frac{\partial }{\partial t}g(x,t)=\frac{\partial }{\partial x}g(x,t)-tg(x,t)
    \end{equation}\

    \textup{Performing the same derivatives on the generating function that has been defined as a sum in equation (3), yields the same relation in equation (33):}

    \begin{equation}
    t^{2}\frac{\partial }{\partial t}\sum_{n=0}^{\infty }t^{n}H^{e}_{n}(x)=\frac{\partial }{\partial x}\sum_{n=0}^{\infty }t^{n}H^{e}_{n}(x)-t\sum_{n=0}^{\infty }t^{n}H^{e}_{n}(x)
    \end{equation}\

    \textup{This verifies the generating function equation 
    introduced in equation (16).}

    \subsection{Solutions to Partial Differential Equations Derived from Generating Functions}

    We show that the solutions to partial differential equation (32) encompass the form of the generating function introduced in equation (3).

    The auxiliary system of equations for first order partial differential equation introduced in equation (33) is presented as:

    \begin{equation}
   \frac{d t}{t^{2}}=-dx=-\frac{d g(x,t)}{tg(x,t)}
    \end{equation}\

    Integrating the first two with the t auxiliary equations yields two independent first integrals as follows:

    \begin{equation}
   \psi_{1}=-\frac{1}{t}+x=c_{1} \quad\quad \psi_{2}=g(x,t)t=c_{2}
    \end{equation}\ 

    Thus the integral of equation (33) reads as:

    \begin{equation}
   \Phi(\psi_{1},\psi_{2})=0
    \end{equation}\
    Where $\Phi$ is an arbitrary function.
    As a solution for $g(x,t)$ we may obtain:

    \begin{equation}
   tg(x,t)=\varphi(-\frac{1}{t}+x)
    \end{equation}\

 Therefor the general solution may read as:   

 \begin{equation}
   g(x,t)=\frac{1}{t}\varphi(-\frac{1}{t}+x)
 \end{equation}\   

This solution is compatible with the form of new generating function in equation (3).  
\\    

The differential equation (33) associated to new generating function, differs from the associated differential equation of presented generating function  in equation (1).  Repeating the same derivatives respect to $t$ and $x$ for generating function presented in equation (1):

\begin{equation}
   G(x,t)=\sum_{0}^{\infty }H^{e}_{n}(x)\frac{t^{n}}{n!}=e^{xt-\frac{1}{2}t^{2}}
   \end{equation}\

yields the relations:

\begin{equation}
  \frac{\partial }{\partial x}G(x,t)=tG(x,t) ,\quad \frac{\partial }{\partial t}G(x,t)=(x-t)G(x,t)
   \end{equation}\

 These equations result in the partial differential equation:

 \begin{equation}
  \frac{\partial }{\partial x}G(x,t)+\frac{\partial }{\partial t}G(x,t)=xG(x,t) 
    \end{equation}\
 This differs from the differential equation presented for $g(x,t)$ in equation (33).   

 The solution to this partial differential equation through its auxiliary equations reads as:

\begin{equation}
   \frac{dx}{1}=\frac{dt}{1}=\frac{dG(x,t)}{xG(x,t)}
   \end{equation}\

\begin{equation}
   \psi_{1}=x-t=c_{1} , \quad \psi_{2}=G(x,t)e^{-\frac{x^{2}}{2}}=c_{2}
    \end{equation}\ 

Thus for general solution $\Phi(\psi_{1},\psi_{2})=0
    $  we have:
    
\begin{equation}
   G(x,t)e^{-\frac{x^{2}}{2}}= \varphi(x-t)
   \end{equation}\ 

By choosing:

\begin{equation}
   \varphi(x-t)=e^{-\frac{(x-t)^{2}}{2}}
    \end{equation}\

The solution reads as:    

\begin{equation}
  G(x,t)e^{-\frac{x^{2}}{2}}=e^{-\frac{(x-t)^{2}}{2}}
   \end{equation}\ 
Or:
\begin{equation}
   G(x,t)=e^{xt-\frac{1}{2}t^{2}}
   \end{equation}\
This is the generating function in equation (1).

\subsection{Relation to Error Function and Cumulative Distribution Function}  

Respect to equation (16) and the identity for complementary Error function:

\begin{equation}
   erfc(x)=\frac{1}{\sqrt{\pi}}\Gamma(\frac{1}{2},x^{2})
    \end{equation}\ 

We obtain:
\begin{equation}
g(x,t)=\sum_{n=0}^{\infty }t^{n}H^{e}_{n}(x)=\frac{\sqrt{\pi}}{\sqrt{2}\left | t \right |}e^{\frac{(1-xt)^{2}}{2t^{2}}}erfc(\frac{1-xt}{\sqrt{2}\left | t \right |})
\end{equation}\ 

Respect to relations between cumulative probability of a normal distribution (CDF) and error function we have:
\begin{equation}
  \Phi(x,\mu,\sigma)=1-\frac{1}{2}\left( erfc\left( \frac{x-\mu}{\sqrt{2}\sigma} \right) \right)
    \end{equation}\ 

By the identity $ erfc(-x)=2-erfc(x) $ we get:

\begin{equation}
  \Phi(x,\mu,\sigma)=\frac{1}{2}\left( erfc\left( \frac{\mu-x}{\sqrt{2}\sigma} \right) \right)
    \end{equation}\ 
    
Let $\sigma=1$ and $\mu =\frac{1}{t}$, thus by equation (50) we obtain:
\begin{equation}
  \sum_{n=0}^{\infty }t^{n}H^{e}_{n}(x)=\sum_{n=0}^{\infty }\left( \frac{1}{\mu} \right)^{n}H^{e}_{n}(x)=\sum_{n=0}^{\infty }\frac{H^{e}_{n}(x)}{\mu^{n}}=\sqrt{2\pi}\mu\Phi(x,\mu,1)
    \end{equation}\ 

The last equation implies:

\begin{equation}
  \sum_{n=0}^{\infty }\frac{H^{e}_{n}(x)}{\mu^{n+1}}=\sqrt{2\pi}\Phi(x,\mu,1)
    \end{equation}\ 

With the identity ${\mathbb{E}\left[ H^{e}_{n}(x) \right]}=\mu^{n} $, equation (54) becomes:

\begin{equation}
  \sum_{n=0}^{\infty }\frac{{\mathbb{E}\left[ H^{e}_{n}(x) \right]}}{\mu^{n+1}}=\sum_{n=0}^{\infty }\frac{1}{\mu}=\sqrt{2\pi}{\mathbb{E}\left[\Phi(x,\mu,1) \right]}=\infty 
    \end{equation}\ 

Squaring both sides of equation (54) yields:

\begin{equation}
  \sum_{m,n=0}^{\infty }\frac{H^{e}_{n}(x)H^{e}_{m}(x)}{\mu^{n+m+2}}=2\pi\Phi^{2}(x,\mu,1)
    \end{equation}\ 

The equations(50),(54),(55) and (56) are new relations which hold right for Hermite polynomials.    
    
\section{New Generating Function for Bivariate Hermite Polynomials}

\begin{theorem} Let $ x,y\in \mathbb{R}, s,t\in \mathbb{R}-\left\{ 0 \right\} , m,n\in \mathbb{N}\cup \left\{0 \right\}$

 The generating functions for product of Hermite polynomials and Bivariate Hermite polynomials are respectively as follows:
 
 \begin{equation}
     g(s,t,x,y)=\frac{1}{2\left | ts \right |}e^{\frac{(1-xt)^{2}}{2t^{2}}}e^{\frac{(1-ys)^{2}}{2s^{2}}}\Gamma(\frac{1}{2},\frac{(1-xt)^{2}}{2t^{2}})\Gamma(\frac{1}{2},\frac{(1-ys)^{2}}{2s^{2}})    
 \end{equation}

 \begin{equation}
      g(x,y,t,s)=\sum_{n=0}^{\infty }\sum_{m=0}^{\infty }t^{n}s^{m}H_{n,m}^{e}(\mathrm{M};x,y)=\frac{1}{2\left | t's' \right |}e^{\frac{(1-xt')^{2}}{2t'^{2}}}e^{\frac{(1-ys')^{2}}{2s'^{2}}}\Gamma(\frac{1}{2},\frac{(1-xt')^{2}}{2t'^{2}})\Gamma(\frac{1}{2},\frac{(1-ys')^{2}}{2s'^{2}}) 
 \end{equation}

 \end{theorem}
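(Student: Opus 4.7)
My plan for proving Theorem 2 falls naturally into two parts, one for each displayed identity.

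For equation (57), the strategy is simply to multiply the two one-variable generating functions already established in Theorem 1. Since the Hermite polynomials in $x$ and in $y$ act on independent variables, the double sum factors as
$$\sum_{n=0}^{\infty}\sum_{m=0}^{\infty} t^{n} s^{m} H^{e}_{n}(x) H^{e}_{m}(y) = \Bigl(\sum_{n=0}^{\infty} t^{n} H^{e}_{n}(x)\Bigr)\Bigl(\sum_{m=0}^{\infty} s^{m} H^{e}_{m}(y)\Bigr).$$
Applying Theorem 1 to each factor and combining the prefactors via $\frac{1}{\sqrt{2t^{2}}}\cdot\frac{1}{\sqrt{2s^{2}}} = \frac{1}{2|ts|}$ produces the right-hand side of (57). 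This part is purely algebraic and presents no real obstacle.

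For equation (58), I would interpret the bivariate Hermite polynomials $H^{e}_{n,m}(\mathrm{M};x,y)$, by analogy with equation (2), as the image of the monomial $x^{n}y^{m}$ under the operator $\exp\!\bigl(-\tfrac{1}{2}\nabla^{T}\mathrm{M}\,\nabla\bigr)$, where $\mathrm{M}$ is a symmetric positive-definite $2\times 2$ matrix and $\nabla=(\partial_{x},\partial_{y})^{T}$. The plan is to apply this operator directly to the double geometric series $\sum_{n,m} t^{n}s^{m}x^{n}y^{m}=\frac{1}{(1-xt)(1-ys)}$, in exact parallel with the passage from (4) to (16). To decouple the mixed partials hidden inside $\nabla^{T}\mathrm{M}\,\nabla$, I would diagonalize $\mathrm{M}$ through an orthogonal change of coordinates in the $(x,y)$-plane and let $(t',s')$ denote the images of $(t,s)$ under the same rotation, so that in the primed coordinates the operator splits as $\exp(-\tfrac{\lambda_{1}}{2}\partial_{x'}^{2})\exp(-\tfrac{\lambda_{2}}{2}\partial_{y'}^{2})$, reducing the problem to two independent copies of Theorem 1 evaluated at $(x',t')$ and $(y',s')$. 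The identity (57), already proved, then applies verbatim in the primed variables and yields (58).

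The main obstacle will be keeping the coordinate change bookkeeping consistent: one has to verify that after the rotation, the double geometric series factors again into the product of two one-variable geometric series in $(x',t')$ and $(y',s')$, and that the eigenvalues $\lambda_{1},\lambda_{2}$ of $\mathrm{M}$ can be absorbed into a rescaling of the primed variables so that the final expression matches the clean form written on the right-hand side of (58). Once this correspondence between $(t,s,\mathrm{M})$ and $(t',s')$ is pinned down, the remainder is a direct application of Theorem 1 twice, with no new analytic content beyond what was already developed in the proof of the one-variable case.
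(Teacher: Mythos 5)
Your treatment of equation (57) is exactly the paper's argument: factor the double sum as $g(x,t)\,g(y,s)$, apply Theorem 1 to each factor, and combine the prefactors via $\frac{1}{\sqrt{2t^{2}}}\cdot\frac{1}{\sqrt{2s^{2}}}=\frac{1}{2\left|ts\right|}$. That part is correct and complete.

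For equation (58) you have taken a genuinely different route from the paper, and it contains a gap that your own plan does not close. In the paper, (58) is essentially a \emph{definition} rather than a derived identity: following W\"unsche, the polynomials $H^{e}_{n,m}(\mathrm{M};x,y)$ are taken to be the coefficients of $t^{n}s^{m}$ in the expansion of $g(x,t')\,g(y,s')$, where $(t',s')=(t,s)\mathrm{M}$ is a linear substitution in the expansion parameters only; the variables $x,y$ are untouched, $\mathrm{M}$ is an arbitrary real $2\times 2$ matrix (not assumed symmetric or positive definite), and the only thing verified is that $\mathrm{M}=\mathbf{I}$ recovers (57). Your proposal instead defines $H^{e}_{n,m}(\mathrm{M};x,y)$ as $\exp\bigl(-\tfrac{1}{2}\nabla^{T}\mathrm{M}\nabla\bigr)x^{n}y^{m}$, which is a different object with no a priori reason to have the stated generating function. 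More seriously, the decoupling step fails: after an orthogonal change of coordinates diagonalizing $\mathrm{M}$ in the $(x,y)$-plane, the kernel $\frac{1}{(1-xt)(1-ys)}$ does \emph{not} factor into a product of two one-variable geometric series in the rotated variables, since $1-xt$ becomes $1-(\alpha x'+\beta y')t$ and mixes $x'$ with $y'$; the product structure of the kernel is tied to the original coordinates. You identify this as ``the main obstacle'' but offer no way around it, and within your framework there is none, so the reduction to two independent copies of Theorem 1 does not go through. To align with the paper, treat (58) as defining $H^{e}_{n,m}(\mathrm{M};x,y)$ by the substitution $(t',s')=(t,s)\mathrm{M}$ in the already-established product formula (57), and check the $\mathrm{M}=\mathbf{I}$ specialization.
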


 \begin{proof}
 Respect to equation (3) we have:

\begin{equation}
     g(x,y,t,s)=g(x,t)g(y,s)=\sum_{n,m=0}^{\infty }t^{n}s^{m}H^{e}_{n}(x)H^{e}_{m}(y)=\frac{1}{2\left | ts \right |}e^{\frac{(1-xt)^{2}}{2t^{2}}}e^{\frac{(1-ys)^{2}}{2s^{2}}}\Gamma(\frac{1}{2},\frac{(1-xt)^{2}}{2t^{2}})\Gamma(\frac{1}{2},\frac{(1-ys)^{2}}{2s^{2}}) 
 \end{equation}

 This is a simple product of two Hermite polynomials. For a general definition of the 2-dimensional Hermite polynomial which is based on the generating function defined in equation (3), first we introduce the 2-dimensional Hermite polynomials as defined in \cite{wunsche2001hermite} :

 \begin{equation}
     g(s,t,x,y)=\sum_{n=0}^{\infty }\sum_{m=0}^{\infty }t^{n}s^{m}H_{n,m}^{e}(\mathrm{M};x,y) 
 \end{equation}

 Where $\mathrm{M}$ is a matrix with real entries:
\begin{equation}
  \mathrm{M}= \begin{bmatrix}
a & b \\
c & d
\end{bmatrix}  
 \end{equation}

 That transforms the vector  $(t,s)$ to $(t',s')$ from the right side:
 
 \begin{equation}
  (t',s')=(t,s)\mathrm{M}\Rightarrow t'=at+cs \quad s'=bt+ds 
 \end{equation}
 
 Thus, the new proposed 2-dimensional generating function is defined by the equation:

  \begin{equation}
      g(x,y,t,s)=\sum_{n=0}^{\infty }\sum_{m=0}^{\infty }t^{n}s^{m}H_{n,m}^{e}(\mathrm{M};x,y)=\frac{1}{2\left | t's' \right |}e^{\frac{(1-xt')^{2}}{2t'^{2}}}e^{\frac{(1-ys')^{2}}{2s'^{2}}}\Gamma(\frac{1}{2},\frac{(1-xt')^{2}}{2t'^{2}})\Gamma(\frac{1}{2},\frac{(1-ys')^{2}}{2s'^{2}}) 
 \end{equation}

 For special case with $\mathrm{M}=\textbf{I}$ we have, $s'=s$ and $t'=t $, thus the equation (63) reduces to the simpler form in equation (59):
\begin{equation}
 g(x,y,t,s)=g(x,t)g(y,s)
 \end{equation}
 \end{proof}

 \section{Contour Integral Representation}

 In this section, the contour integral representation for new generating function is derived. Taking into account the formal generating function introduced in equation (1), the contour integral representation of Hermite polynomials for $t,x\in \mathbb{C}$ reads as \cite{abramowitz1988handbook}:

\begin{equation}
 H^{e}_{n}(x)=\frac{n!}{2\pi i}\oint_{C}\frac{e^{xt-\frac{1}{2}t^{2}}}{t^{n+1}}dt
 \end{equation}
Where $C$ is a simple closed contour in complex plane encircles $t=0$ as the sole singular point of the integrand.
The numerator of the integrand is recognized as the ordinary generating function for Hermite polynomial that was introduced in equation (1). We introduce an alternative contour integral for Hermite polynomials based on new generating function in equation (16).

 \begin{theorem}
     Let $ x\in \mathbb{C},t\in \mathbb{C}-\left\{ 0 \right\}$. A contour integral representation for Hermite polynomials is :
 \begin{equation}
 H^{e}_{n}(x)=\frac{1}{2\sqrt{2}\pi i}\oint_{\gamma}\frac{e^{\frac{(1-xt)^{2}}{2t^{2}}}\Gamma(\frac{1}{2},\frac{(1-xt)^{2}}{2t^{2}})}{t^{n+2}}dt
 \end{equation} 
 Where $\gamma$ is defined as the simple closed curve (contour) with center at $t=0$ .
 \end{theorem}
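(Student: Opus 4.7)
The plan is to recognise equation (66) as nothing more than a Cauchy coefficient extraction applied to the generating function established in Theorem 1. For each fixed $x$, the series $g(x,t)=\sum_{n=0}^{\infty}t^{n}H^{e}_{n}(x)$ converges in some disk around $t=0$ and hence defines a function holomorphic in $t$ there, with $g(x,0)=1$. For any simple closed contour $\gamma$ lying inside this disk and encircling the origin, Cauchy's coefficient formula yields immediately
$$H^{e}_{n}(x)=\frac{1}{2\pi i}\oint_{\gamma}\frac{g(x,t)}{t^{n+1}}\,dt.$$

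The second step is to substitute the closed-form expression for $g(x,t)$ from equation (16). In the complex setting, the real-variable quantity $\sqrt{2t^{2}}=\sqrt{2}\,|t|$ is replaced by $\sqrt{2}\,t$ under the branch of the square root consistent with the Taylor expansion of $g$ about $t=0$. Plugging this in produces
$$H^{e}_{n}(x)=\frac{1}{2\pi i}\oint_{\gamma}\frac{1}{\sqrt{2}\,t}\cdot\frac{e^{(1-xt)^{2}/(2t^{2})}\,\Gamma\!\left(\tfrac{1}{2},(1-xt)^{2}/(2t^{2})\right)}{t^{n+1}}\,dt,$$
and combining the two $t$-factors in the denominator produces exactly the target formula (66), including the prefactor $\frac{1}{2\sqrt{2}\pi i}$.

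The main point requiring care is ensuring that the combined integrand is meromorphic inside $\gamma$ with a single pole of order $n+2$ at $t=0$. Taken in isolation, the factors $1/\sqrt{2t^{2}}$, $e^{(1-xt)^{2}/(2t^{2})}$, and $\Gamma(1/2,(1-xt)^{2}/(2t^{2}))$ all misbehave at $t=0$: the first has a branch cut, the second an essential singularity, and the third involves the incomplete gamma function evaluated at an argument tending to $\infty$ along complex directions. What rescues the argument is Theorem 1 itself: on a real interval the product of these three factors agrees with the entire power series $\sum_{n}t^{n}H^{e}_{n}(x)$, so by uniqueness of analytic continuation the product is holomorphic in a punctured neighbourhood of $0$ and in fact has a removable singularity there. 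Provided $\gamma$ is chosen small enough to avoid the zero $t=1/x$ of $1-xt$ (and hence the residual branch issues in $\Gamma(1/2,\cdot)$), Cauchy's theorem applies without modification and the contour representation (66) follows.
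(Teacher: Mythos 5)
Your route is the same as the paper's: both read (66) as the Cauchy coefficient/derivative formula applied to $g(x,t)$ in the variable $t$, substitute the closed form (16), and trade $\sqrt{2t^{2}}$ for $\sqrt{2}\,t$ so that the extra factor of $t$ turns $t^{n+1}$ into $t^{n+2}$ and supplies the prefactor $\frac{1}{2\sqrt{2}\pi i}$. So there is no methodological difference to report.

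However, the two analytic facts you invoke to justify this step are both false, and this is a genuine gap (one the paper's own proof also glosses over, by letting $z_{0}\to 0$ in Cauchy's formula even though $t=0$ is a singular point of the integrand). First, $\sum_{n}t^{n}H^{e}_{n}(x)$ does not converge in any disk about $t=0$: already at $x=0$ one has $H^{e}_{2m}(0)=(-1)^{m}(2m-1)!!$, which grows like $(2m/e)^{m}$, so the radius of convergence is $0$ and the series defines no holomorphic function of $t$. Second, and as a consequence, the closed-form integrand cannot have a removable singularity at $t=0$: if it did, its Taylor coefficients would be the $H^{e}_{n}(x)$ and the series would converge. Concretely, write $\Gamma(\tfrac{1}{2},z)=\sqrt{\pi}\,\mathrm{erfc}(w)$ with $w=(1-xt)/(\sqrt{2}\,t)$, the branch forced by your replacement $\sqrt{2t^{2}}\mapsto\sqrt{2}\,t$ for small $t>0$. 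Continuing analytically around the origin to $t<0$ sends $w\to-\infty$, where $\mathrm{erfc}(w)\to 2$, so the factor $e^{z}\Gamma(\tfrac{1}{2},z)$ behaves like $2\sqrt{\pi}\,e^{(1-xt)^{2}/(2t^{2})}$ and blows up super-polynomially as $t\to 0^{-}$: the singularity at $t=0$ is essential, not removable. Theorem 1 only gives agreement with an asymptotic expansion as $t\to 0^{+}$ along the real axis, which is not enough for ``uniqueness of analytic continuation'' to transport holomorphy into a punctured complex neighbourhood. As it stands, the contour integral over a small circle picks out a Laurent coefficient of a function with an essential singularity, and neither your argument nor the paper's shows that this coefficient equals $H^{e}_{n}(x)$; some genuinely different ingredient (e.g., a Borel/Watson-type justification for integrating the asymptotic series termwise with error control on the whole circle) would be needed.
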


 \begin{proof}

 By taking $n-th$ derivatives with respect to $t$ for the generating function in Equation (16) and calculating it at $t\rightarrow 0$ we obtain:
 
\begin{equation}
 n!H^{e}_{n}(x)=\lim_{t\to 0}\frac{d^{n}}{dt^{n}}g(x,t)=\lim_{t\to 0}\frac{d^{n}}{dt^{n}}\left[\frac{1}{\sqrt{2t^{2}}}e^{\frac{(1-xt)^{2}}{2t^{2}}}\Gamma(\frac{1}{2},\frac{(1-xt)^{2}}{2t^{2}})  \right]
 \end{equation}

 Let $\gamma$ be a simple closed contour in complex plane . If the complex function $f(z)$ is analytic on a region containing $\gamma$ and its interior, the Cauchy integral for $n-th$ derivative of  $f(z)$ at a point $z_{0}$ inside the contour $\gamma$ is presented as \cite{weber2003essential}:

 \begin{equation}
  f^{\left( n \right)}(z_{0})=\frac{n!}{2\pi i}\oint_{\gamma}\frac{f(z)}{\left( z-z_{0} \right)^{n+1}}dz
 \end{equation}

 Where $\gamma$ is a simple closed contour containing $z_{0}$. In the case where $z=0$ is a singular point of $f(z)$, the integral could be written at the limit ${z_{0}\to 0}$ as follows :

\begin{equation}
  \lim_{z_{0}\to 0}f^{\left( n \right)}(z_{0})=\frac{n!}{2\pi i}\lim_{z_{0}\to 0}\oint_{\gamma}\frac{f(z)}{\left( z-z_{0} \right)^{n+1}}dz=\oint_{\gamma}\frac{f(z)}{z^{n+1}}dz
 \end{equation}

 Thus respect to the Taylor expansion coefficients of generating 
 function on the right side of (67) and replacing $f(z)$ by $g(x,t)$, and  $z$ by $t$ for fixed $x$, the Cauchy contour integral formula for $n-th$ derivative, yields:

\begin{equation}
 \lim_{t\to 0}\frac{d^{n}}{dt^{n}}g(x,t)=\frac{n!}{2\sqrt{2}\pi i}\oint_{\gamma}\frac{e^{\frac{(1-xt)^{2}}{2t^{2}}}\Gamma(\frac{1}{2},\frac{(1-xt)^{2}}{2t^{2}})}{t^{n+2}} dt
 \end{equation}

 Thus, with respect to equation (67) we have:

 \begin{equation}
 H^{e}_{n}(x)=\frac{1}{2\sqrt{2}\pi i}\oint_{\gamma}\frac{e^{\frac{(1-xt)^{2}}{2t^{2}}}\Gamma(\frac{1}{2},\frac{(1-xt)^{2}}{2t^{2}})}{t^{n+2}}dt
 \end{equation}

 \end{proof}
 This is a new contour integral representation for Hermite polynomial over the complex field $\mathbb{C}$.

\section{Conclusion}

Applying the operational method to derive a new generating function for Hermite polynomials reveals its connection with incomplete gamma function and a sum without integer factorials, i.e. $n!$. The close relation between incomplete gamma function and error function results in new relations between this functions and cumulative distribution function and Hermite generating function. A definition for two-variable Hermite generating function based on the new generating function has been proposed. An alternative contour integral representation for Hermite polynomials has been derived. By more investigations, this new generating function may lead to more hidden relations between Hermite polynomial and other special functions.

\bibliographystyle{abbrv}
\bibliography{sample}

\section*{Declarations}

\begin{itemize}

\item \textup{The author confirms sole responsibility for the study conception and manuscript preparation.}
\item \textup{No funding was received for conducting this study.}
\item \textup{The author declares that he has no conflict of interest.}
\item \textup{Author's affiliation: Manouchehr Amiri, Tandis Hospital, Tehran, Iran}
\end{itemize}

\end{document}